\begin{document}
\newcommand{\dyle}{\displaystyle}
\newcommand{\R}{{\mathbb{R}}}
 \newcommand{\Hi}{{\mathbb H}}
\newcommand{\Ss}{{\mathbb S}}
\newcommand{\N}{{\mathbb N}}
\newcommand{\Rn}{{\mathbb{R}^n}}
\newcommand{\ieq}{\begin{equation}}
\newcommand{\eeq}{\end{equation}}
\newcommand{\ieqa}{\begin{eqnarray}}
\newcommand{\eeqa}{\end{eqnarray}}
\newcommand{\ieqas}{\begin{eqnarray*}}
\newcommand{\eeqas}{\end{eqnarray*}}
\newcommand{\Bo}{\put(260,0){\rule{2mm}{2mm}}\\}
\def\L#1{\label{#1}} \def\R#1{{\rm (\ref{#1})}}


\theoremstyle{plain}
\newtheorem{theorem}{Theorem} [section]
\newtheorem{corollary}[theorem]{Corollary}
\newtheorem{lemma}[theorem]{Lemma}
\newtheorem{proposition}[theorem]{Proposition}
\def\neweq#1{\begin{equation}\label{#1}}
\def\endeq{\end{equation}}
\def\eq#1{(\ref{#1})}

\theoremstyle{definition}
\newtheorem{definition}[theorem]{Definition}
\newtheorem{remark}[theorem]{Remark}

\numberwithin{figure}{section}
\newcommand{\res}{\mathop{\hbox{\vrule height 7pt width .5pt depth
0pt \vrule height .5pt width 6pt depth 0pt}}\nolimits}
\def\at#1{{\bf #1}: } \def\att#1#2{{\bf #1}, {\bf #2}: }
\def\attt#1#2#3{{\bf #1}, {\bf #2}, {\bf #3}: } \def\atttt#1#2#3#4{{\bf #1}, {\bf #2}, {\bf #3},{\bf #4}: }
\def\aug#1#2{\frac{\displaystyle #1}{\displaystyle #2}} \def\figura#1#2{ \begin{figure}[ht] \vspace{#1} \caption{#2}
\end{figure}} \def\B#1{\bibitem{#1}} \def\q{\int_{\Omega^\sharp}}
\def\z{\int_{B_{\bar{\rho}}}\underline{\nu}\nabla (w+K_{c})\cdot
\nabla h} \def\a{\int_{B_{\bar{\rho}}}}
\def\b{\cdot\aug{x}{\|x\|}}
\def\n{\underline{\nu}} \def\d{\int_{B_{r}}}
\def\e{\int_{B_{\rho_{j}}}} \def\LL{{\mathcal L}}
\def\itr{\mathrm{Int}\,}
\def\D{{\mathcal D}}
 \def\tg{\tilde{g}}
\def\A{{\mathcal A}}
\def\S{{\mathcal S}}
\def\H{{\mathcal H}}
\def\M{{\mathcal M}}
\def\T{{\mathcal T}}
\def\U{{\mathcal U}}
\def\N{{\mathcal N}}
\def\I{{\mathcal I}}
\def\F{{\mathcal F}}
\def\J{{\mathcal J}}
\def\E{{\mathcal E}}
\def\F{{\mathcal F}}
\def\G{{\mathcal G}}
\def\HH{{\mathcal H}}
\def\W{{\mathcal W}}
\def\H{\D^{2*}_{X}}
\def\d{d^X_M }
\def\LL{{\mathcal L}}
\def\H{{\mathcal H}}
\def\HH{{\mathcal H}}
\def\itr{\mathrm{Int}\,}
\def\vah{\mbox{var}_\Hi}
\def\vahh{\mbox{var}_\Hi^1}
\def\vax{\mbox{var}_X^1}
\def\va{\mbox{var}}
\def\SS{{\mathcal S}}
 \def\Y{{\mathcal Y}}
\def\length{{l_\Hi}}
\newcommand{\average}{{\mathchoice {\kern1ex\vcenter{\hrule
height.4pt width 6pt depth0pt} \kern-11pt} {\kern1ex\vcenter{\hrule height.4pt width 4.3pt depth0pt} \kern-7pt} {} {} }}
\def\weak{\rightharpoonup}
\def\detu{{\rm det}(D^2u)}
\def\detut{{\rm det}(D^2u(t))}
\def\detvt{{\rm det}(D^2v(t))}
\def\detv{{\rm det}(D^2v)}
\def\uuu{u_xu_yu_{xy}}
\def\uuut{u_x(t)u_y(t)u_{xy}(t)}
\def\uuus{u_x(s)u_y(s)u_{xy}(s)}
\def\uuutn{u_x(t_n)u_y(t_n)u_{xy}(t_n)}
\def\vvv{v_xv_yv_{xy}}
\newcommand{\ave}{\average\int}

\title[Explicit blowing up solutions]{Explicit blowing up solutions for a higher
order parabolic equation with Hessian nonlinearity}

\author[C. Escudero]{Carlos Escudero}
\address{}
\email{}

\keywords{Fourth order partial differential equations, Hessian nonlinearity, finite time blow-up,
infinite time blow-up, complete blow-up, explicit solutions.
\\ \indent 2010 {\it MSC: 35A01, 35B44, 35C05, 35G25, 35G31, 35Q82.}}

\date{\today}

\begin{abstract}
In this work we consider a nonlinear parabolic higher order partial differential equation that has been proposed as a model for epitaxial growth.
This equation possesses both global-in-time solutions and solutions that blow up in finite time, for which this blow-up is mediated by its Hessian
nonlinearity.
Herein, we further analyze its blow-up behaviour by means of the construction of explicit solutions in the square, the disc, and the plane. Some of
these solutions show complete blow-up in either finite or infinite time. Finally, we refine a blow-up criterium that was proved for this evolution equation. Still, existent blow-up criteria based on {\it a priori} estimates do not completely reflect the singular character of these explicit blowing up solutions.
\end{abstract}

\maketitle

\section{Introduction}

In this work we consider both the initial and initial-boundary value problems for the partial differential equation
\begin{equation}
\label{pareq}
u_t= \det(D^2 u) -\Delta^2 u,
\end{equation}
posed either on $\mathbb{R}^2$ or on a bounded subset of the plane.
Higher order equations have attracted attention for both
its mathematical structure, as they in general lack the maximum principles characteristic of second order models, as well as their interest
in different applications~\cite{GGS}. Higher order equations constitute one of the current trends in the modern theory of partial differential
equations, such as equations that involve fractional derivatives~\cite{fksm}.

Equation~\eqref{pareq} is a higher order parabolic equation provided with a Hessian nonlinearity and, because of that,
it possesses no second order analogue; in this respect, it is an interesting model to be analyzed. It has been studied in the context of
condensed matter physics as a model for epitaxial growth~\cite{escudero,ek}. Its stationary solutions have been analyzed in~\cite{n3,n4,n5}.
Its self-similar solutions were studied in~\cite{n1}. Its higher dimensional counterparts have been considered in~\cite{n0,escudero2,n6}.
Its numerical analysis was carried out in~\cite{vpe}.
And the initial-boundary value problem for this equation, posed on a bounded domain with either Dirichlet or Navier boundary conditions,
was initially analyzed in~\cite{n2}, and the analysis was subsequently developed in~\cite{xu,xuzhou,zhou,zhou2}.

To connect equation~\eqref{pareq} to epitaxial growth we consider the function
\begin{equation}
\begin{aligned}\nonumber
&\sigma : \Omega \subseteq \mathbb{R}^{N}\times \mathbb{R}_{+} \rightarrow \mathbb{R},
\end{aligned}
\end{equation}
which mathematically describes the height of the epitaxially growing interface in the spatial point $(x,y) \in \Omega \subseteq \mathbb{R}^{2} $ at the time instant $t \in \mathbb{R}_{+}$. We reduce the study to the case $N=2$, since it is the one that corresponds to the actual physical situation. The macroscopic description of the growing interface is given by a suitable partial differential equation to be solved for $\sigma$. There are several
partial differential equations of this sort, see for instance the discussion in~\cite{escudero} and~\cite{ek}, and references therein.
We now summarize the geometric derivation of this model following~\cite{n3}.
Let us start assuming that the height function $\sigma$ obeys a gradient flow with a forcing term
\begin{equation}\nonumber
\sigma_t=\left(1+(\nabla \sigma)^{2}\right)^{\frac{1}{2}}\left[-\frac{\delta J(\sigma)}{\delta \sigma}+\zeta(x,y,t)\right].
\end{equation}
The functional $J(\sigma)$ denotes a potential that describes the microscopic properties of the interface at the macroscopic scale and is given by
\begin{equation}\nonumber
J(\sigma)=\int_{\Omega} Q(z)\left(1+(\nabla \sigma)^{2}\right)^{\frac{1}{2}} ~dx~dy,
\end{equation}
where the square root term models growth along the normal to the
surface (i.~e. the mathematical expression of the physical interface),
$z$ denotes the mean curvature of the graph of $\sigma(x,y,\cdot)$, which analytically describes the surface,
and $Q$ is an unknown function of $z$.
After assuming analyticity on $Q(z)$, it can be expressed as its power series expansion given by
\begin{equation}\nonumber
Q(z)=K_{0}+K_{1}z+K_{2} \frac{z^{2}}{2!}+K_{3}\frac{z^{3}}{3!}+\cdots.
\end{equation}
Applying the small gradient expansion, which assumes $|\nabla \sigma| \ll 1$, together with this power series expansion, yields, after simplification, the reduced expression
\begin{equation}\nonumber
\sigma_t=K_{0} \, \Delta \sigma+2 K_{1}~\det(D^{2} \sigma)~-~K_{2} \, \Delta^{2} \sigma-\frac{1}{2} K_{3} \, \Delta(\Delta\sigma)^{2}+\zeta(x,y,t),
\end{equation}
for the equation, if only linear and quadratic terms are retained.
For the relation between this equation and other models in this field, along with a geometrical explanation of each term in it, we refer the reader
to~\cite{n3}. For modeling reasons we can assume that $K_{0}=0~\mbox{and}~K_{3}=0$, see again~\cite{n3}. Therefore it reduces to
\begin{equation}\nonumber
\sigma_t + K_{2} \, \Delta^{2} \sigma=2 K_{1}~\mbox{det}(D^{2} \sigma)+\zeta(x,y,t).
\end{equation}
This equation can be considered as a possible continuum description of epitaxial growth~\cite{escudero,ek}.
Equation~\eqref{pareq} arises after non-dimensionalization and by suppressing the time-dependent forcing term (i.~e. we set $\zeta(x,y,t) \equiv 0$),
which is a measure of the speed at which new particles are deposited into the system. In this autonomous case we denote the solution by $u$ rather
than $\sigma$ (to be precise, they are different, since $u$ is the non-dimensionalization of $\sigma$).
Note that the non-autonomous case is also accessible to mathematical analysis~\cite{n2}, but in the search of explicit solutions, like
herein, it is more convenient to consider the autonomous situation.

Along this work we complement the previous results on blowing up solutions for the evolution problem that have been found in~\cite{n2,xu,xuzhou,zhou,zhou2}.
Our approach is two-fold: first, we compute explicit solutions to model~\eqref{pareq} that belong to three different classes;
some blow up in finite time, some blow up in infinite time, and others do not blow up at all. This program is carried out in section~\ref{explicit} by
means of three complementary problems: one is posed on the unit square, other is posed on the unit disc, and the third one is posed
on the whole of the plane.
In all the cases where it happens, the blow-up is shown to be complete; and some regularities in the blow-up structure of all of these examples are appreciated. The blow-ups
are then so wild that they exceed the estimations shown in~\cite{n2,xu,xuzhou,zhou,zhou2}. For instance, in~\cite{n2}, the blow-up is shown to be in the
Sobolev space $H^2(\Omega)$, but this in turn implies the divergence of the $W^{1,\infty}(\Omega)-$norm of the solution; this fact is proven in our current section~\ref{refine},
and this section indeed constitutes the second step of our approach. But also note that the blow-up is shown to be in $L^2(\Omega)$ in~\cite{zhou}.
Still, our explicit examples show a more dramatic behavior than the one expected from the {\it a priori} estimates.
Moreover, under the precise conditions assumed in~\cite{n2}, infinite time blow-up is proven not to happen.
But in all the situations considered herein, at least in the cases exhaustively analyzed,
solutions that blow up in infinite time are present. Overall, this shows that equation~\eqref{pareq} still presents some features that are not
completely captured by the theoretical developments built so far.
Our main conclusions, which highlight some open questions, are drawn in section~\ref{conclusions}.

\section{Explicit formulas}
\label{explicit}

\subsection{Explicit solutions on the unit square}

Consider equation~\eqref{pareq}
in the domain $[0,1]^2$ subject to the boundary conditions
\begin{eqnarray}\nonumber
& & \partial_x^3 u(0,y,t)= \partial_x^3 u(1,y,t)= \partial_y^3 u(x,0,t)= \partial_y^3 u(x,1,t)=0, \\ \nonumber
& & \partial_x u(0,y,t)= \partial_y u(x,0,t)= 0, \\ \nonumber
& & \partial_x u(1,y,t) = \partial_x^2 u(1,y,t), \\ \nonumber
& & \partial_y u(x,1,t) = \partial_y^2 u(x,1,t).
\end{eqnarray}
It admits the explicit solution
$$
u(x,y,t)= \frac{a_0}{1 + 12 \, a_0 \, t} \, x^2 \, y^2 - \frac{2}{3} \ln(1 + 12 \, a_0 \, t).
$$
For $a_0=0$ the solution $u \equiv 0$ for all times. For $a_0 > 0$ the solution exists for all times but blows up in infinite time
in such a way that
$$
\lim_{t \nearrow \infty} u(x,y,t)= - \infty,
$$
for all $(x,y) \in [0,1]^2$.
For
$a_0 < 0$ the solution blows up in finite time:
$$
T^* = \frac{-1}{12 a_0}.
$$
In this case we observe
$$
\lim_{t \nearrow T^*} u(x,y,t)= + \infty,
$$
in the sets $[0,1]^2 \cap \{x =0\}$ and $[0,1]^2 \cap \{y=0\}$. We also find
$$
\lim_{t \nearrow T^*} u(x,y,t)= - \infty,
$$
for all other $(x,y) \in [0,1]^2$.
We note that in all cases the blow-up is complete.

\subsection{Explicit solutions on the unit disc}

On the unit disc our model reads
$$
u_t = \frac{u_r \, u_{rr}}{r} - \Delta^2_r u,
$$
where $r = \sqrt{x^2 + y^2}$ and $\Delta_r(\cdot)=\frac{1}{r}[r(\cdot)_r]_r$.
We consider it now subject to the boundary conditions
\begin{eqnarray}\nonumber
& & 3 \, \partial_r u(1,t) = \partial_r^2 u(1,t), \\ \nonumber
& & 2 \, \partial_r^2 u(1,t) = \partial_r^3 u(1,t), \\ \nonumber
& & \partial_r u(0,t) = \partial_r^3 u(0,t)= 0.
\end{eqnarray}
The explicit solution in this case reads
$$
u(r,t)= \frac{a_0}{1 - 48 \, a_0 \, t} \, r^4 + \frac{4}{3} \ln(1 - 48 \, a_0 \, t).
$$
For $a_0=0$ the solution $u \equiv 0$ for all times. For $a_0 < 0$ the solution exists for all times but blows up in infinite time
in this fashion:
$$
\lim_{t \nearrow \infty} u(r,t)= +\infty,
$$
for all $r \in [0,1]$. For
$a_0 > 0$ the solution blows up in finite time:
$$
T^* = \frac{1}{48 a_0},
$$
and it fulfils
$$
\lim_{t \nearrow T^*} u(0,t)= -\infty,
$$
and
$$
\lim_{t \nearrow T^*} u(r,t)= +\infty,
$$
for all $r \in \, ]0,1]$.

\subsection{Families of solutions on the plane}

On $\mathbb{R}^2$, and in the non-radial case, we have the following families of solutions
\begin{eqnarray}\nonumber
u(x,y,t) &=& \frac{a_0}{1 + 12 \, a_0 \, a_3 \, t} \, \left( \frac{3456 \, a_1^3 \, a_3^3 \, + 432 \, a_1^2 \, a_3^2 -1}{46656 \, a_2^2 \, a_3^3} \, x^4 \right.
+ a_1 \, x^2 \, y^2 \\ \nonumber
& & + a_2 \, x \, y^3 + \frac{288 \, a_1^3 \, a_3^2 + 12 \, a_1 \, a_3 -1}{648 \, a_2 \, a_3^2} \, x^3 \, y + \left. \frac{9 \, a_2^4 \, a_3}{24 \, a_1 \, a_3 -1} \, y^4 \right) \\ \nonumber
& & - \frac{1}{12 \, a_3} \left( 4 \, a_1 +
\frac{3456 \, a_1^3 \, a_3^3 + 432 \, a_1^2 \, a_3^2 -1}{1944 \, a_2^2 \, a_3^3} + \frac{216 \, a_2^2 \, a_3}{24 \, a_1 \, a_3 -1} \right) \\ \label{solr21}
& & \times \ln(1 + 12 \, a_0 \, a_3 \, t) + a_4 \, x + a_5 \, y + a_6,
\end{eqnarray}
where $a_0$, $a_1$, $a_2$, $a_3$, $a_4$, $a_5$ and $a_6$ are arbitrary real parameters expect for:
\begin{eqnarray}\nonumber
a_2 &\neq& 0, \\ \nonumber
a_3 &\neq& 0, \\ \nonumber
a_1 a_3 &\neq& 1/24.
\end{eqnarray}
The solution blows up in finite time
$$
T^* = \frac{-1}{12 \, a_0 \, a_3}
$$
provided $a_0 \, a_3 < 0$, in which case it blows up on the set
\begin{eqnarray}\label{bumanifold}
&& \left\{ (x,y) \in \mathbb{R}^2 \left| \,\,
\frac{3456 \, a_1^3 \, a_3^3 \, + 432 \, a_1^2 \, a_3^2 -1}{46656 \, a_2^2 \, a_3^3} \, x^4 \right. \right.
+ a_1 \, x^2 \, y^2 + a_2 \, x \, y^3 \\ \nonumber && \hspace{2.25cm}
+ \frac{288 \, a_1^3 \, a_3^2 + 12 \, a_1 \, a_3 -1}{648 \, a_2 \, a_3^2} \, x^3 \, y + \left. \frac{9 \, a_2^4 \, a_3}{24 \, a_1 \, a_3 -1} \, y^4
\neq 0 \right\},
\end{eqnarray}
unless the condition
\begin{equation}\label{algebraic}
4 \, a_1 +
\frac{3456 \, a_1^3 \, a_3^3 + 432 \, a_1^2 \, a_3^2 -1}{1944 \, a_2^2 \, a_3^3} + \frac{216 \, a_2^2 \, a_3}{24 \, a_1 \, a_3 -1}
\neq 0
\end{equation}
holds, in which case the blow-up is complete. Note that condition~\eqref{algebraic} is immediately met whenever $a_1=0$.
In such a case formula~\eqref{solr21} reduces to
\begin{eqnarray}\nonumber
u(x,y,t) &=& -\frac{a_0}{1 + 12 \, a_0 \, a_3 \, t} \\ \nonumber &&
\times \left( \frac{1}{46656 \, a_2^2 \, a_3^3} \, x^4
- a_2 \, x \, y^3 + \frac{1}{648 \, a_2 \, a_3^2} \, x^3 \, y + 9 \, a_2^4 \, a_3 \, y^4 \right) \\ \nonumber
& & + \frac{1}{12 \, a_3} \left( \frac{1}{1944 \, a_2^2 \, a_3^3} + 216 \, a_2^2 \, a_3 \right)
\ln(1 + 12 \, a_0 \, a_3 \, t) \\ \label{solr22}
&& + a_4 \, x + a_5 \, y + a_6,
\end{eqnarray}
and it fulfils
\begin{equation}\nonumber
\lim_{t \nearrow T^*} u(x,y,t)= +\infty
\end{equation}
on the set
\begin{eqnarray}\label{bum1}
&& \left\{ (x,y) \in \mathbb{R}^2 \, \left| \,\,
\frac{1}{46656 \, a_2^2 \, a_3^3} \, x^4
- a_2 \, x \, y^3 \right. \right.
\\ \nonumber && \qquad \qquad \left.
+ \frac{1}{648 \, a_2 \, a_3^2} \, x^3 \, y + 9 \, a_2^4 \, a_3 \, y^4
= 0 \right\}
\end{eqnarray}
and, if $a_0>0$, also on the set
\begin{eqnarray}\label{bum2}
&& \left\{ (x,y) \in \mathbb{R}^2 \, \left| \,\,
\frac{1}{46656 \, a_2^2 \, a_3^3} \, x^4
- a_2 \, x \, y^3 \right. \right.
\\ \nonumber && \qquad \qquad \left.
+ \frac{1}{648 \, a_2 \, a_3^2} \, x^3 \, y + 9 \, a_2^4 \, a_3 \, y^4
< 0 \right\};
\end{eqnarray}
conversely, if $a_0<0$, also on the set
\begin{eqnarray}\label{bum3}
&& \left\{ (x,y) \in \mathbb{R}^2 \, \left| \,\,
\frac{1}{46656 \, a_2^2 \, a_3^3} \, x^4
- a_2 \, x \, y^3 \right. \right.
\\ \nonumber && \qquad \qquad \left.
+ \frac{1}{648 \, a_2 \, a_3^2} \, x^3 \, y + 9 \, a_2^4 \, a_3 \, y^4
> 0 \right\}.
\end{eqnarray}
On the other hand we find
\begin{equation}\nonumber
\lim_{t \nearrow T^*} u(x,y,t)= -\infty
\end{equation}
if $a_0>0$ on the set~\eqref{bum3}, and if $a_0<0$ on the set~\eqref{bum2}.
The solution~\eqref{solr21} blows up in infinite time if $a_0 \, a_3 > 0$ and~\eqref{algebraic} is fulfilled;
so in particular formula~\eqref{solr22} is a valid example of this fact. In this case
\begin{equation}\nonumber
\lim_{t \nearrow \infty} u(x,y,t)= +\infty,
\end{equation}
for all $(x,y) \in \mathbb{R}^2$.
The solution is global in $[0,\infty]$ if $a_0=0$; in such a case it reduces to
\begin{eqnarray}\nonumber
u(x,y,t) = a_4 \, x + a_5 \, y + a_6.
\end{eqnarray}

In the radial case we find
$$
u(r,t)= \frac{a_0}{1 - 48 \, a_0 \, a_1 \, t} \,\, \frac{r^4}{48 \, a_1} + \frac{1}{36 \, a_1^2} \ln(1 - 48 \, a_0 \, a_1 \, t) + a_2,
$$
where $a_0$, $a_1$ and $a_2$ are arbitrary real parameters except that $a_1 \neq 0$.
If $a_0 = 0$ then $u(r,t)=a_2$ and therefore the solution is global in time.
If $a_0 \, a_1 <0$ the solution blows up in infinite time in the following fashion:
\begin{equation}\nonumber
\lim_{t \nearrow \infty} u(r,t)= +\infty,
\end{equation}
for all $r \in [0,\infty[$.
If $a_0 \, a_1 >0$ the solution blows up in finite time:
$$
T^* = \frac{1}{48 \, a_0 \, a_1}.
$$
In this case the blow-up is again complete and such that
\begin{equation}\nonumber
\lim_{t \nearrow T^*} u(0,t)= -\infty,
\end{equation}
and
\begin{equation}\nonumber
\lim_{t \nearrow T^*} u(r,t)= +\infty,
\end{equation}
for all $r \in \, ]0,\infty[$.

We have yet another family of non-radial solutions that might blow up in infinite time
\begin{eqnarray}\nonumber
u(x,y,t) &=& a_1 \, x^2 + a_2 \, y^2 + 4 \, a_1 \, a_2 \, t + a_0 \, \exp[a_3 \, x + (2 \, a_2 \, a_3^2 -a_3^4) \, t] \\ \nonumber
& & + \, a_4 \, x + a_5 \, y + a_6,
\end{eqnarray}
where $a_0$, $a_1$, $a_2$, $a_3$, $a_4$, $a_5$ and $a_6$ are arbitrary real parameters. The solution does not blow up in infinite time
if simultaneously either $a_1=0$ or $a_2=0$ and either $2 \, a_2 \le a_3^2$ or $a_3=0$;
it neither blows up in infinite time if simultaneously either $a_1=0$ or $a_2=0$ and $a_0=0$.
Conversely it does blow up in infinite time if either
$a_1 \, a_2 \neq 0$ or $2 \, a_2 > a_3^2$ and $a_3 \neq 0$ and $a_0 \neq 0$. For this solution we can identify the following types of behavior: if either
$a_3=0$ or $2 \, a_2 \le a_3^2$ or $a_0 = 0$ then
\begin{equation}\nonumber
\lim_{t \nearrow \infty} u(x,y,t)= +\infty,
\end{equation}
for $a_1 \, a_2 > 0$ and all $(x,y) \in \mathbb{R}^2$, and
\begin{equation}\nonumber
\lim_{t \nearrow \infty} u(x,y,t)= -\infty,
\end{equation}
for $a_1 \, a_2 < 0$ and all $(x,y) \in \mathbb{R}^2$. If $a_3 \neq 0$, $a_0 \neq 0$, and $2 \, a_2 > a_3^2$ then
\begin{equation}\nonumber
\lim_{t \nearrow \infty} u(x,y,t)= +\infty,
\end{equation}
if $a_0 > 0$ and for all $(x,y) \in \mathbb{R}^2$, and
\begin{equation}\nonumber
\lim_{t \nearrow \infty} u(x,y,t)= -\infty,
\end{equation}
if $a_0 < 0$ and for all $(x,y) \in \mathbb{R}^2$.

\section{Refinement of a previous blow-up result}
\label{refine}

The aim of this section is to prove that, under exactly the same conditions considered in~\cite{n2}, the $H^2(\Omega)$ blow-up proven there is in fact
a $W^{1,\infty}(\Omega)$ blow-up. We will show that this fact follows as a consequence of the Gagliardo-Nirenberg interpolation.

Along this section we will consider $C$ a universal positive constant which numerical value may change from line to line or even within the same line,
and $\Omega \subset \mathbb{R}^2$ a bounded and open domain provided with a smooth boundary. The setting is exactly the same as the one assumed
in~\cite{n2}.

\begin{lemma}\label{w1infty}
Let $u \in C([0,T],H^2_0(\Omega))$ be a weak solution to the initial-Dirichlet problem for equation~\eqref{pareq} as defined in~\cite{n2}
such that
$$
\int_0^T \| \nabla u \|_{L^\infty(\Omega)}^4 < \infty \,.
$$
Then
$$
\sup_{0 \le s \le t} \| u \|_{H^2(\Omega)}^2 + \int_0^t \| u \|_{H^4(\Omega)}^2 \le C_1 \,
\exp \left[ C_2 \int_0^t \| \nabla u \|_{L^\infty(\Omega)}^4 \right] \| u_0 \|_{H^2(\Omega)}^{2} \,,
$$
for all $0 \le t \le T$ and some positive universal constants $C_1, C_2$, where the initial condition $u_0 \in H^2_0(\Omega)$.
\end{lemma}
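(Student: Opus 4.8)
The plan is to derive a differential inequality for the $H^2$-energy by testing the equation against $\Delta^2 u$, and then to close it with Grönwall's inequality; the entire analytic content is concentrated in a single Gagliardo--Nirenberg estimate for the Hessian nonlinearity. Since $\Delta^2 u$ is not an admissible test function for a mere weak solution, I would carry out every computation on the Galerkin (or smooth) approximations used to construct the solution in~\cite{n2}, obtain all bounds uniformly, and pass to the limit by weak lower semicontinuity; below I suppress this approximation and argue formally.

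First I would multiply~\eqref{pareq} by $\Delta^2 u$ and integrate over $\Omega$. Using $u,u_t\in H^2_0(\Omega)$, two integrations by parts give $\int_\Omega u_t\,\Delta^2 u = \tfrac12\tfrac{d}{dt}\|\Delta u\|_{L^2(\Omega)}^2$, all boundary terms vanishing because $u_t=\partial_n u_t=0$ on $\partial\Omega$, while the biharmonic term yields the dissipation $-\|\Delta^2 u\|_{L^2(\Omega)}^2$. This produces
\[
\tfrac12\tfrac{d}{dt}\|\Delta u\|_{L^2(\Omega)}^2+\|\Delta^2 u\|_{L^2(\Omega)}^2=\int_\Omega \det(D^2u)\,\Delta^2 u .
\]
Everything then reduces to absorbing the nonlinear term into the dissipation at the cost of a factor $\|\nabla u\|_{L^\infty(\Omega)}^4$.

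For the nonlinear term I would use Cauchy--Schwarz together with the pointwise bound $|\det(D^2u)|\le C|D^2u|^2$ to obtain $\big|\int_\Omega \det(D^2u)\,\Delta^2 u\big|\le C\|D^2u\|_{L^4(\Omega)}^2\|\Delta^2u\|_{L^2(\Omega)}$. The key step is the Gagliardo--Nirenberg inequality
\[
\|D^2u\|_{L^4(\Omega)}^2\le C\,\|\nabla u\|_{L^\infty(\Omega)}\,\|D^3u\|_{L^2(\Omega)},
\]
which, applied componentwise to $f=\partial_k u$, follows from the dimension-free identity $\int_\Omega|\nabla f|^4=-\int_\Omega f\,\mathrm{div}(|\nabla f|^2\nabla f)$ and Cauchy--Schwarz, the boundary term vanishing since $\nabla u=0$ on $\partial\Omega$. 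Combining this with the interpolation $\|D^3u\|_{L^2(\Omega)}\le C\|D^2u\|_{L^2(\Omega)}^{1/2}\|D^4u\|_{L^2(\Omega)}^{1/2}$, the elliptic-regularity equivalences $\|D^2u\|_{L^2(\Omega)}\sim\|\Delta u\|_{L^2(\Omega)}$ and $\|D^4u\|_{L^2(\Omega)}\sim\|\Delta^2u\|_{L^2(\Omega)}$ valid on $H^2_0(\Omega)$ up to lower-order terms absorbed by Poincaré, and Young's inequality with exponents $4$ and $4/3$, I would reach
\[
\Big|\int_\Omega \det(D^2u)\,\Delta^2u\Big|\le \tfrac12\|\Delta^2u\|_{L^2(\Omega)}^2+C\,\|\nabla u\|_{L^\infty(\Omega)}^4\,\|\Delta u\|_{L^2(\Omega)}^2 .
\]
It is precisely this Gagliardo--Nirenberg estimate that forces the fourth power of $\|\nabla u\|_{L^\infty(\Omega)}$ and keeps the energy $\|\Delta u\|_{L^2(\Omega)}^2$ as the multiplying factor, which is exactly what the exponential in the statement requires.

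Inserting this bound and absorbing the half-dissipation leaves $\tfrac{d}{dt}\|\Delta u\|_{L^2(\Omega)}^2+\|\Delta^2u\|_{L^2(\Omega)}^2\le C\|\nabla u\|_{L^\infty(\Omega)}^4\|\Delta u\|_{L^2(\Omega)}^2$. Grönwall's inequality on $[0,t]$ then gives $\|\Delta u(t)\|_{L^2(\Omega)}^2\le \exp\!\big(C\int_0^t\|\nabla u\|_{L^\infty(\Omega)}^4\big)\,\|\Delta u_0\|_{L^2(\Omega)}^2$, and integrating the surviving dissipation in time yields the same control for $\int_0^t\|\Delta^2u\|_{L^2(\Omega)}^2$. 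Converting back through $\|\Delta u\|_{L^2(\Omega)}\sim\|u\|_{H^2(\Omega)}$, $\|\Delta^2u\|_{L^2(\Omega)}\gtrsim\|u\|_{H^4(\Omega)}$ and $\|\Delta u_0\|_{L^2(\Omega)}\le\|u_0\|_{H^2(\Omega)}$ produces the stated inequality with suitable $C_1,C_2$. I expect the main obstacle to be rigorous rather than conceptual: justifying the $\Delta^2u$ test function, hence working with the approximations of~\cite{n2} and checking that all of the above bounds pass to the limit, and making the elliptic-regularity and interpolation equivalences on $\Omega$ precise, including the treatment of the lower-order remainders. The analytic heart, the Gagliardo--Nirenberg inequality above, is the one step that genuinely must be right.
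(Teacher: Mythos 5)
Your proposal is correct and follows essentially the same path as the paper's proof: testing against $\Delta^2 u$, bounding the Hessian nonlinearity by $\|D^2u\|_{L^4}^2\|\Delta^2u\|_{L^2}$, the Gagliardo--Nirenberg step producing the $\|\nabla u\|_{L^\infty}$ factor via the same integration-by-parts identity (the paper writes it as $\|\Delta u\|_{L^4}^4=-\int_\Omega\nabla u\cdot\nabla(\Delta u)^3$, you as the componentwise version for $f=\partial_k u$), the interpolation $\|\nabla\Delta u\|_{L^2}\le C\|\Delta u\|_{L^2}^{1/2}\|\Delta^2u\|_{L^2}^{1/2}$, Young with exponents $4$ and $4/3$, and Gr\"onwall. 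The only differences are cosmetic --- you work with full derivative norms $\|D^ku\|_{L^2}$ plus elliptic-regularity equivalences where the paper stays with $\Delta u$, $\nabla\Delta u$, $\Delta^2u$ throughout, and you are in fact more careful than the paper about justifying the test function through the approximation scheme of~\cite{n2}.
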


\begin{proof}
Test equation~\eqref{pareq} against $\Delta^2 u$ and integrate by parts to find
\begin{eqnarray}\nonumber
\frac12 \frac{d}{dt} \| \Delta u \|_{L^2(\Omega)}^2 + \| \Delta^2 u \|_{L^2(\Omega)}^2 \!\! &=& \!\! \int_\Omega \Delta^2 u \det(D^2 u) \\ \nonumber
&\le& \!\! C \|\Delta u\|_{L^4(\Omega)}^2 \|\Delta^2 u\|_{L^2(\Omega)},
\end{eqnarray}
after integration by parts and the use of H\"older inequality.
Now compute
\begin{eqnarray}\nonumber
\|\Delta u\|_{L^4(\Omega)}^4 &=& \int_\Omega |\Delta u|^4 \\ \nonumber
&=& -\int_\Omega \nabla u \cdot \nabla (\Delta u)^3 \\ \nonumber
&\le& C \|\nabla u\|_{L^\infty(\Omega)} \|\Delta \nabla u\|_{L^2(\Omega)} \|\Delta u\|_{L^4(\Omega)}^2,
\end{eqnarray}
after integrating by parts and using H\"older inequality.
Consider the Gagliardo-Nirenberg interpolation
$$
\| \Delta \nabla u \|_{L^2(\Omega)} \le C \| \Delta u \|_{L^2(\Omega)}^{1/2} \| \Delta^2 u \|_{L^2(\Omega)}^{1/2} \,;
$$
Rearranging all the estimates we find
$$
\|\Delta u\|_{L^4(\Omega)}^2 \le C \|\nabla u\|_{L^\infty(\Omega)} \|\Delta \nabla u\|_{L^2(\Omega)},
$$
and
\begin{eqnarray}\nonumber
\frac12 \frac{d}{dt} \| \Delta u \|_{L^2(\Omega)}^2 + \| \Delta^2 u \|_{L^2(\Omega)}^2 \!\! &\le& \!\!
C \|\nabla u\|_{L^\infty(\Omega)} \| \Delta u \|_{L^2(\Omega)}^{1/2} \| \Delta^2 u \|_{L^2(\Omega)}^{3/2} \\ \nonumber
&\le& \!\! C \|\nabla u\|_{L^\infty(\Omega)}^4 \| \Delta u \|_{L^2(\Omega)}^{2} +\frac12 \| \Delta^2 u \|_{L^2(\Omega)}^{2},
\end{eqnarray}
after the use of Young inequality.
In consequence
$$
\frac{d}{dt} \| \Delta u \|_{L^2(\Omega)}^2 \le C \|\nabla u\|_{L^\infty(\Omega)}^4 \| \Delta u \|_{L^2(\Omega)}^{2},
$$
and thus Gr\"onwall inequality leads to
$$
\| \Delta u \|_{L^2(\Omega)}^2 \le \exp \left[ C \int_0^t \| \nabla u \|_{L^\infty(\Omega)}^4 \right] \| \Delta u_0 \|_{L^2(\Omega)}^{2}
\quad \forall \, 0 \le t \le T \,.
$$
The last three equations lead to
$$
\max_{0 \le s \le t} \| \Delta u \|_{L^2(\Omega)}^2 + \int_0^t \| \Delta^2 u \|_{L^2(\Omega)}^2 \le 2 \,
\exp \left[ C \int_0^t \| \nabla u \|_{L^\infty(\Omega)}^4 \right] \| \Delta u_0 \|_{L^2(\Omega)}^{2} \,,
$$
$\forall \, 0 \le t \le T$.
\end{proof}

\begin{corollary}
There exists a non-empty $\mathcal{U} \subset H^2_0(\Omega)$ such that if $u_0 \in \mathcal{U}$ then there exists a $T^* < \infty$
such that
$$
\int_0^{T^*} \| \nabla u \|_{L^\infty(\Omega)}^4= \infty.
$$
Moreover we have $\limsup_{t \nearrow T^*} \|u\|_{H^2(\Omega)}=\infty$ if and only if the above equality holds.
\end{corollary}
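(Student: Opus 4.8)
The plan is to obtain $\mathcal{U}$ from the finite-time $H^2(\Omega)$ blow-up already established in~\cite{n2} and to derive everything else from Lemma~\ref{w1infty} supplemented by one extra energy estimate. Concretely, I would let $T^*$ denote the maximal existence time of the weak solution and take $\mathcal{U}\subset H^2_0(\Omega)$ to be the set of initial data producing finite-time $H^2$ blow-up, i.e.\ those with $T^*<\infty$ and $\limsup_{t\nearrow T^*}\|u\|_{H^2(\Omega)}=\infty$; this set is non-empty precisely by the result of~\cite{n2}. The entire statement then reduces to the equivalence $\limsup_{t\nearrow T^*}\|u\|_{H^2(\Omega)}=\infty \iff \int_0^{T^*}\|\nabla u\|_{L^\infty(\Omega)}^4=\infty$, since, granting it, the displayed identity for $u_0\in\mathcal{U}$ is immediate.

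For the forward implication I would argue by contraposition using only Lemma~\ref{w1infty}: if $\int_0^{T^*}\|\nabla u\|_{L^\infty}^4<\infty$ then, because $T^*<\infty$, for every $t<T^*$ the bound of the lemma gives $\sup_{0\le s\le t}\|u\|_{H^2}^2 \le C_1\exp\!\big[C_2\int_0^{T^*}\|\nabla u\|_{L^\infty}^4\big]\|u_0\|_{H^2}^2$, a finite constant independent of $t$, whence $\limsup_{t\nearrow T^*}\|u\|_{H^2}<\infty$. This is the easy half.

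The reverse implication is the substantive part, and I would again argue contrapositively, assuming $\sup_{[0,T^*)}\|\Delta u\|_{L^2}\le M<\infty$ and aiming at $\int_0^{T^*}\|\nabla u\|_{L^\infty}^4<\infty$. First I would redo the energy identity from the proof of Lemma~\ref{w1infty}, testing against $\Delta^2 u$, but estimate the Hessian term \emph{without} reintroducing $\|\nabla u\|_{L^\infty}$: combining the two-dimensional Ladyzhenskaya inequality $\|\Delta u\|_{L^4}^2\le C\|\Delta u\|_{L^2}\|\nabla\Delta u\|_{L^2}$ with the interpolation $\|\nabla\Delta u\|_{L^2}\le C\|\Delta u\|_{L^2}^{1/2}\|\Delta^2 u\|_{L^2}^{1/2}$ already used in the lemma, one gets $\int_\Omega\Delta^2 u\,\det(D^2 u)\le C\|\Delta u\|_{L^2}^{3/2}\|\Delta^2 u\|_{L^2}^{3/2}$. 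Young's inequality and $\|\Delta u\|_{L^2}\le M$ then give $\frac{d}{dt}\|\Delta u\|_{L^2}^2+\|\Delta^2 u\|_{L^2}^2\le CM^6$, and integrating over $[0,T^*)$ with $T^*<\infty$ and $u_0\in H^2_0$ yields $\int_0^{T^*}\|\Delta^2 u\|_{L^2}^2<\infty$, hence $\int_0^{T^*}\|u\|_{H^4}^2<\infty$ by elliptic regularity for $u\in H^2_0$.

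Finally I would convert this into the gradient bound through an Agmon-type inequality: in two dimensions $\|\nabla u\|_{L^\infty}\le C\|\nabla u\|_{L^2}^{1/2}\|\nabla u\|_{H^2}^{1/2}\le C\|u\|_{H^2}^{1/2}\|u\|_{H^3}^{1/2}$, and interpolating $\|u\|_{H^3}\le C\|u\|_{H^2}^{1/2}\|u\|_{H^4}^{1/2}$ gives $\|\nabla u\|_{L^\infty}^4\le C\|u\|_{H^2}^3\|u\|_{H^4}$; with $\|u\|_{H^2}\le CM$ and Cauchy--Schwarz in time this bounds $\int_0^{T^*}\|\nabla u\|_{L^\infty}^4\le CM^3 (T^*)^{1/2}\big(\int_0^{T^*}\|u\|_{H^4}^2\big)^{1/2}<\infty$, closing the equivalence. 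The main obstacle is exactly this last step: because the embedding $H^2(\Omega)\hookrightarrow W^{1,\infty}(\Omega)$ fails (it is borderline) in dimension two, $\|\nabla u\|_{L^\infty}$ cannot be controlled by $\|u\|_{H^2}$ alone, so the argument must manufacture the extra $H^4$-integrability from the energy estimate and spend a fractional power of it, and it must do so with a bound on the nonlinearity (via Ladyzhenskaya rather than the $\|\nabla u\|_{L^\infty}$-based estimate of the lemma) that keeps the energy inequality free of circularity.
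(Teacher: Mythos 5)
Your proposal is correct, and it is substantially more detailed than the paper's own proof, which consists of a single sentence deferring everything to Lemma~\ref{w1infty} and the results of~\cite{n2}. Your choice of $\mathcal{U}$ (data producing finite-time $H^2$ blow-up, non-empty by~\cite{n2}) and your forward implication (contraposition of Lemma~\ref{w1infty}) coincide with what the paper intends. Where you genuinely diverge is the reverse implication: the paper implicitly leans on the theory of~\cite{n2} --- the continuation criterion recorded in the remark immediately after the corollary, namely that the weak solution ceases to exist at a finite $T^*$ if and only if $\limsup_{t \nearrow T^*}\|u\|_{H^2(\Omega)}=\infty$, together with the space-time regularity that weak solutions enjoy on any interval of existence --- whereas you manufacture the needed integrability by hand: the Ladyzhenskaya-based bound $\int_\Omega \Delta^2 u \, \det(D^2 u) \le C\|\Delta u\|_{L^2(\Omega)}^{3/2}\|\Delta^2 u\|_{L^2(\Omega)}^{3/2}$ yields $u \in L^2(0,T^*;H^4(\Omega))$ under an $H^2$ bound without any circular appeal to $\|\nabla u\|_{L^\infty(\Omega)}$, and then the Agmon-type interpolation $\|\nabla u\|_{L^\infty(\Omega)}^4 \le C \|u\|_{H^2(\Omega)}^3 \|u\|_{H^4(\Omega)}$ plus Cauchy--Schwarz in time (using $T^*<\infty$) closes the loop. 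This buys self-containedness and correctly isolates the analytic crux --- the borderline failure of $H^2(\Omega)\hookrightarrow W^{1,\infty}(\Omega)$ in two dimensions, which is precisely why a fractional power of $H^4$-integrability must be spent --- at the cost of length; the paper's route is shorter but is only ``direct'' granted the weak-solution package of~\cite{n2}. Two cosmetic caveats, both equally present in the paper's proof of Lemma~\ref{w1infty}: your computations are formal and require the usual regularity justification for testing against $\Delta^2 u$, and since $\Delta u$ does not vanish on $\partial\Omega$, the Ladyzhenskaya and Gagliardo--Nirenberg inequalities on a bounded domain carry lower-order terms in $\|\Delta u\|_{L^2(\Omega)}$, which are harmless under your standing bound $\|\Delta u\|_{L^2(\Omega)} \le M$.
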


\begin{proof}
This result is a direct consequence of Lemma~\ref{w1infty} and the results in~\cite{n2}.
\end{proof}

\begin{remark}
Note that the weak solution to the initial-Dirichlet problem for equation~\eqref{pareq} ceases to exist in a finite time $T^*$ if and only if
$\limsup_{t \nearrow T^*} \|u\|_{H^2(\Omega)}=\infty$, see~\cite{n2}.
\end{remark}

\section{Conclusions and open questions}
\label{conclusions}

In section~\ref{explicit} we have found explicit solutions to equation~\eqref{pareq} in the unit square, the unit disc, and in the plane.
We have exhaustively analyzed the blow-up structure of all of these solutions except for~\eqref{solr21}, which is the more complex by far;
anyway, in this case partial results are discussed. For all the other solutions complete blow-up is always found, with a striking regularity in
its structure. If the blow-up is in infinite time then all points go to either $+\infty$ or $-\infty$, and both possibilities do happen.
If the blow-up happens in finite time, then some points go to $+\infty$ and others to $-\infty$. We have not found counterexamples to these two
facts even in the behavior of~\eqref{solr21}; for instance, in the case of infinite time blow-up exhaustively analyzed for this solution we have
again found that these rules are fulfilled. Therefore, as open questions we may list the determination of the conditions under which the blow-up
is necessarily complete and when there are regions of the plane which do not blow up. Also, if it is possible to find solutions for which the
infinite time blow-up can happen simultaneously to $+\infty$ and $-\infty$; conversely, if the finite time blow-up may happen only towards $+\infty$
or $-\infty$, but not to both. Some answers to these questions may arise if solution~\eqref{solr21} is further understood or if new
explicit formulas are derived.

To fully understand the blow-up structure of solution~\eqref{solr21} one needs to understand in turn the sets
\begin{eqnarray}\nonumber
&& \left\{ (x,y) \in \mathbb{R}^2 \left| \,\,
\frac{3456 \, a_1^3 \, a_3^3 \, + 432 \, a_1^2 \, a_3^2 -1}{46656 \, a_2^2 \, a_3^3} \, x^4 \right. \right.
+ a_1 \, x^2 \, y^2 + a_2 \, x \, y^3 \\ \nonumber && \hspace{2.25cm}
+ \frac{288 \, a_1^3 \, a_3^2 + 12 \, a_1 \, a_3 -1}{648 \, a_2 \, a_3^2} \, x^3 \, y + \left. \frac{9 \, a_2^4 \, a_3}{24 \, a_1 \, a_3 -1} \, y^4
= 0 \right\},
\end{eqnarray}
\begin{eqnarray}\nonumber
&& \left\{ (x,y) \in \mathbb{R}^2 \left| \,\,
\frac{3456 \, a_1^3 \, a_3^3 \, + 432 \, a_1^2 \, a_3^2 -1}{46656 \, a_2^2 \, a_3^3} \, x^4 \right. \right.
+ a_1 \, x^2 \, y^2 + a_2 \, x \, y^3 \\ \nonumber && \hspace{2.25cm}
+ \frac{288 \, a_1^3 \, a_3^2 + 12 \, a_1 \, a_3 -1}{648 \, a_2 \, a_3^2} \, x^3 \, y + \left. \frac{9 \, a_2^4 \, a_3}{24 \, a_1 \, a_3 -1} \, y^4
< 0 \right\},
\end{eqnarray}
and
\begin{eqnarray}\nonumber
&& \left\{ (x,y) \in \mathbb{R}^2 \left| \,\,
\frac{3456 \, a_1^3 \, a_3^3 \, + 432 \, a_1^2 \, a_3^2 -1}{46656 \, a_2^2 \, a_3^3} \, x^4 \right. \right.
+ a_1 \, x^2 \, y^2 + a_2 \, x \, y^3 \\ \nonumber && \hspace{2.25cm}
+ \frac{288 \, a_1^3 \, a_3^2 + 12 \, a_1 \, a_3 -1}{648 \, a_2 \, a_3^2} \, x^3 \, y + \left. \frac{9 \, a_2^4 \, a_3}{24 \, a_1 \, a_3 -1} \, y^4
> 0 \right\}.
\end{eqnarray}
These make up the set~\eqref{bumanifold} and, when $a_1=0$, give rise to the sets~\eqref{bum1}, \eqref{bum2}, and~\eqref{bum3}.
In which of these sets we are located, along with the sign of $a_0$, will determine if the blow up is towards $+\infty$, $-\infty$, or if there
are regions of the plane in which the blow-up does not happen. In this respect it is also important to understand the condition
\begin{equation}\nonumber
4 \, a_1 +
\frac{3456 \, a_1^3 \, a_3^3 + 432 \, a_1^2 \, a_3^2 -1}{1944 \, a_2^2 \, a_3^3} + \frac{216 \, a_2^2 \, a_3}{24 \, a_1 \, a_3 -1}
= 0,
\end{equation}
complementary of~\eqref{algebraic}, which may be relevant for finding regions of the plane which do not blow up.

Also, theoretical developments in~\cite{n2,xu,xuzhou,zhou,zhou2}, and in section~\ref{refine} too, show {\it a priori} estimates that
suggest a less dramatic blow-up than the one found in the explicit examples presented in section~\ref{explicit}. It remains to be
clarified if this theoretical framework could be refined in order to capture the real nature of the blowing up solutions, or if on the contrary
the boundary conditions play such a fundamental role that they forbid the wild divergences found herein in the cases studied in those
references. On the other hand, the infinite time blow-up for the initial-Dirichlet problem was ruled out in~\cite{n2}, but it is present
in some of these explicit solutions. So the conditions that allow this type of singular behavior are still to be revealed, as well as their
potential connection with the other features that characterize the singular solutions we have herein introduced.

\section*{Acknowledgements}

The author gratefully acknowledges Filippo Gazzola and Michael Winkler for discussions.
This work has been partially supported by the Government of Spain (Ministerio de Ciencia, Innovaci\'on y Universidades)
through Project PGC2018-097704-B-I00.

\vskip5mm
\noindent
{\footnotesize
Carlos Escudero\par\noindent
Departamento de Matem\'aticas Fundamentales\par\noindent
Universidad Nacional de Educaci\'on a Distancia\par\noindent
{\tt cescudero@mat.uned.es}\par\vskip1mm\noindent
}

\end{document}